\documentclass[11pt,oneside, english]{article}
\usepackage{dsfont}
\usepackage{amsmath, amssymb, amsthm, verbatim,enumerate, bbm}
\usepackage{enumerate}
\usepackage[utf8x]{inputenc}
\usepackage[T1]{fontenc}
\usepackage[english]{babel}
\usepackage{mathtools}
\usepackage[
letterpaper,
top=0.8in,
bottom=0.8in,
left=1in,
right=1in]{geometry}
\usepackage{bm} %
\usepackage[colorlinks=true, allcolors=blue]{hyperref}

\usepackage{graphicx}
\graphicspath{{./figures/}}
\usepackage{epsfig}
\usepackage{fullpage}
\usepackage{mathrsfs}
\usepackage{enumerate}
\usepackage{verbatim}
\usepackage{mhequ}
\usepackage{algorithm}
\usepackage{algorithmicx}
\usepackage{algpseudocode}
\usepackage[nameinlink,capitalise,noabbrev]{cleveref}

\usepackage{showlabels}
\numberwithin{equation}{section}

\def \be{\begin{equs}}
\def \ee{\end{equs}}

\newtheorem{theorem}{Theorem}[section]

\newtheorem*{namedtheorem}{\theoremname}
\newcommand{\theoremname}{testing}

\newtheorem*{question*}{Question}

\theoremstyle{definition}

\newcommand{\indep}{\rotatebox[origin=c]{90}{$\models$}}
\theoremstyle{plain}

\title{
 Stochastic Intervention
}

\author{
    Rohit Chaudhuri\thanks{Email: {\tt  rohitchau94\_r@isical.ac.in}.}}

\date{}

\begin{document}

\pagestyle{empty}

\maketitle
\thispagestyle{empty} %

\begin{abstract}
This article discusses the application of stochastic intervention to find the optimal treatment distribution yielding a high value of expected potential outcome under the setting where the number of treatments is allowed to vary with $n$. The primary motivation is to obtain a novel summarization of the effect of various treatments which would guide practitioners towards better decision regarding which intervention to choose.

\end{abstract}


%

  %


\pagestyle{plain}
\setcounter{page}{1}

\section{Problem set-up}
We have $n$ units labelled $i=1,2,\dots,n$ and $K$ binary factorial treatments applied to each unit and is labelled $\boldsymbol{T}_{i}$ with $T_{ik}\in \{0,1\}$ for each $i=1,2,\dots,n$ and $k=1,2,\dots,K$. Let $Y_{i}(\boldsymbol{t})$ denote the potential outcome corresponding to the treatment combination $\boldsymbol{t}$ for the unit $i$, where $\boldsymbol{t}\in \{0,1\}^{K}.$ We work under the following assumptions:
\begin{itemize}
    \item \textit{Randomization} which states that, $$Y_{i}(\boldsymbol{t})\indep \boldsymbol{T}_{i},  $$ for all $\boldsymbol{t}.$
    \item \textit{Positivity} which states that probability of treatment allocation satisfies, $$\mathbb{P}\left(\boldsymbol{T}_{i}=\boldsymbol{t}\right)>0. $$for all $\boldsymbol{t}.$
\end{itemize}
In this article, we assume $\mathbb{P}\left(\boldsymbol{T}_{i}=\boldsymbol{t}\right)=2^{-K}$ for all treatment combination $\boldsymbol{t}$ however our theoretical developments encompass the general case as well. We define the \textit{Expected Potential outcome} corresponding to each treatment combination $\boldsymbol{t}$ to be $c_{\boldsymbol{t}}:=\mathbb{E}\left[Y_{i}(\boldsymbol{t})\right].$ To carry forward the stochastic intervention scheme, we introduce a \textit{Parametric model} over the space of all $2^K$ treatment combinations to be, $$\left\{\mathbb{P}_{\theta}\left(\boldsymbol{T}_{i}=\boldsymbol{t}\right),\boldsymbol{t}\in \{0,1\}^{K},\theta \in \Theta_{K}\right\}. $$ A common example of this would be when we take an independent binomial model across treatment factors in which case the parameter space reduces to $\Theta_{K}=(0,1)^{K}$ and the probabilities are characterized as $$ \mathbb{P}_{\theta}\left(\boldsymbol{T}_{i}=\boldsymbol{t}\right)=\prod_{k=1}^{K}\left\{\theta_{i}^{t_k}(1-\theta_{i})^{1-t_{k}}\right\},$$ for $\boldsymbol{t}=(t_1,\dots,t_{k})$ and  $\boldsymbol{\theta}=(\theta_1,\dots,\theta_{k}) \in \Theta_{K}.$ Hence, the complete parameter space of our problem depicting all the unknowns can be conveniently expressed as $$\Lambda_{k}:=\Lambda_{k}(\{c_{\boldsymbol{t}}\}_{\{\boldsymbol{t}\in \{0,1\}^{K}\}},\Theta_{K}):=\Theta_{K}\cup \left\{\{c_{\boldsymbol{t}}\}_{\{\boldsymbol{t}\in \{0,1\}^{K}\}}:c_{\boldsymbol{t}} \in (0,1)\text{ for all }\boldsymbol{t}\in \{0,1\}^{K}\right\}  $$
Our key parameter of interest is the following function over $\theta \in \Theta_{K}$ defined as $$Q(\theta)=\sum_{\boldsymbol{t} \in \mathcal{T}}c_{t}\mathbb{P}_{\theta}\left(\boldsymbol{T}=\boldsymbol{t}\right)+\lambda f(\theta).$$ where $\mathcal{T}$ denote the set of all possible $2^{K}$ many treatment combinations and $f:\Theta_{K}\to \mathbb{R}$ is a known real valued function and $\lambda>0$ is a known constant. We try to infer about the following subset of the parameter space determined from $Q(\theta)$ precisely stated as, $$\Theta^{*}_{K}:=\arg\max_{ \theta \in \Theta_{K}}Q(\theta). $$
we also denote $\theta^{*}$ to be an arbitrary element of $\Theta^{*}_{K}. $Now, we provide a brief heursitics regarding the various terms appearing in the problem set-up
\begin{itemize}
    \item \textbf{Exploration: }The term $\lambda f(\theta)$ allows for more \textit{exploration}. In this article, we take $f$ to be the entropy function associated with the treatment assignment vector. Hence, $$f(\theta)=-\sum_{i=1}^{k}\left\{\theta_{i}\log (\theta_{i})+(1-\theta_{i})\log(1-\theta_{i}) \right\}, $$ the maximum occurs when each $\theta_{i} \approx 0.5$ allowing the system to \textit{explore more}.
    \item \textbf{Exploitation: }The term $\sum_{\boldsymbol{t} \in \mathcal{T}}c_{t}\mathbb{P}_{\theta}\left(\boldsymbol{T}=\boldsymbol{t}\right)$ allows for more \textit{exploitation}. Maximizing this term would always yield $\theta$ with values concentrated at a single co-ordinate.
\end{itemize}
\subsection{Adaptive estimator}
Our approach proceeds by first proposing an estimator of $Q(\theta)$ denoting it by $\hat{Q}(\theta)$. Before proposing our estimator, we describe some notations. For each $\boldsymbol{t} \in \mathcal{T},$ we define,
\be
\mathcal{I}_{\boldsymbol{t}}&:=\left\{i \in [n]: \boldsymbol{T}_{i}=\boldsymbol{t}\right\},\\
n_{\boldsymbol{t}}&:=\left|\mathcal{I}_{\boldsymbol{t}}\right|,\\
\hat{c}_{\boldsymbol{t}}&:=\frac{\left(\sum_{i \in \mathcal{I}_{t}}Y_i\right) }{n_{\boldsymbol{t}}}I_{\{\mathcal{I}_{\boldsymbol{t}}\neq \phi\}}
\ee
Equipped with this we now define our estimator for $Q(\theta)$ for each $\theta \in \Theta_{K}$ to be, $$\hat{Q}(\theta)=\sum_{\boldsymbol{t} \in \mathcal{T}}\hat{c}_{\boldsymbol{t}} \mathbb{P}_{\theta}\left(\boldsymbol{T}=\boldsymbol{t}\right)+\lambda f(\theta). $$ Our final estimator obtained from $\hat{Q}(\theta)$ is defined as: $$\hat{\theta}_{n}:=\arg\max _{\theta \in \Theta_{K}}\hat{Q}(\theta)$$ We make the following assumption before stating our main theorem regarding consistency of $\{\hat{\theta}_{n}\}_{\{n \in \mathbb{N}\}}.$ The first of which is that we assume $\Theta^{*}_{K}=\{\theta^{*}\}$ although the arguments of the proof proceeds to the general case when $ \Theta^{*}_{K}$ is any arbitrary subset of $\Theta_{K}.$ Secondly, we assume that conditional on $\{\mathcal{I}_{\boldsymbol{t}}\}_{\{\boldsymbol{t} \in \mathcal{T}\}}$, we have $$\{Y_i\}_{\{i \in \mathcal{I}_{\boldsymbol{t}}\}} \stackrel{i.i.d}{\sim} \text{Ber}\left(c_{\boldsymbol{t}}\right) \text{ for all }\boldsymbol{t} \in \mathcal{T}.  $$ Here too we want to emphasize that this assumption is not restrictive as our arguments proceeds over general case when the random variables are uniformly bounded and even when they have sub-exponentially decaying tails with their sub-exponential norms uniformly bounded.
\begin{theorem}
Under the previous assumptions and additionally assuming that for every $\varepsilon > 0$ there exist an $\eta > 0$ ( with $\eta=\Omega\left(\sqrt{\frac{|\mathcal{T}|\log |\mathcal{T}|}{n}}\right)$ ) with, $$\inf_{|\theta- \theta^*|>\varepsilon}\left|Q(\theta)-Q(\theta^*)\right| > \eta. $$ Then, $\hat{\theta}_{n} \stackrel{p}{\to}\theta^{*}$ as $n \to \infty$ if $$\frac{|\mathcal{T}|\log |\mathcal{T}|}{n}=o(1).$$
\end{theorem}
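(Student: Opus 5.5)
We follow the standard M-estimation argmax-consistency scheme. The idea is to reduce everything to a uniform deviation bound for $\hat{Q}-Q$ over $\Theta_{K}$, and then use the well-separatedness assumption. The starting observation is that the penalty $\lambda f(\theta)$ appears identically in $Q$ and $\hat{Q}$, so it cancels:
\begin{equation*}
\sup_{\theta\in\Theta_{K}}\bigl|\hat{Q}(\theta)-Q(\theta)\bigr|
=\sup_{\theta}\Bigl|\sum_{\boldsymbol{t}\in\mathcal{T}}(\hat{c}_{\boldsymbol{t}}-c_{\boldsymbol{t}})\mathbb{P}_{\theta}(\boldsymbol{T}=\boldsymbol{t})\Bigr|
\le \max_{\boldsymbol{t}\in\mathcal{T}}|\hat{c}_{\boldsymbol{t}}-c_{\boldsymbol{t}}|,
\end{equation*}
because $\mathbb{P}_{\theta}(\cdot)$ is a probability vector. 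The supremum over the (possibly high-dimensional) $\Theta_{K}$ therefore reduces to a maximum over $|\mathcal{T}|$ cell means, and no chaining or covering argument is needed.

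Next we show $\max_{\boldsymbol{t}}|\hat{c}_{\boldsymbol{t}}-c_{\boldsymbol{t}}|=O_{p}\bigl(\sqrt{|\mathcal{T}|\log|\mathcal{T}|/n}\bigr)$, in two steps.
\begin{enumerate}
\item \emph{Cell counts.} Since $n_{\boldsymbol{t}}\sim\mathrm{Bin}(n,|\mathcal{T}|^{-1})$, a multiplicative Chernoff bound gives $\mathbb{P}(n_{\boldsymbol{t}}<n/(2|\mathcal{T}|))\le \exp(-cn/|\mathcal{T}|)$. A union bound over $|\mathcal{T}|$ cells then bounds the failure probability by $|\mathcal{T}|\exp(-cn/|\mathcal{T}|)\to0$; this is exactly where $|\mathcal{T}|\log|\mathcal{T}|/n=o(1)$ is used. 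On this event $E_{n}$ every cell is nonempty, so the indicator in $\hat{c}_{\boldsymbol{t}}$ equals one.
\item \emph{Cell means.} Conditionally on $\{\mathcal{I}_{\boldsymbol{t}}\}$, Hoeffding's inequality for the Bernoulli observations gives $\mathbb{P}(|\hat{c}_{\boldsymbol{t}}-c_{\boldsymbol{t}}|>s\mid \mathcal{I})\le 2\exp(-2n_{\boldsymbol{t}}s^{2})$. On $E_{n}$ this is at most $2\exp(-ns^{2}/|\mathcal{T}|)$. A union bound with $s=C\sqrt{|\mathcal{T}|\log|\mathcal{T}|/n}$ then bounds the failure probability by $2|\mathcal{T}|^{1-C^{2}}$. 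For the sub-exponential generalisation, Bernstein replaces Hoeffding, and the extra linear term is harmless because $s\to0$.
\end{enumerate}
Denote this rate by $r_{n}$; we have shown $\sup_{\theta}|\hat{Q}-Q|\le Cr_{n}$ with probability tending to one.

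Finally comes the argmax step. By optimality of $\hat{\theta}_{n}$ for $\hat{Q}$ and of $\theta^{*}$ for $Q$,
\begin{equation*}
0\le Q(\theta^{*})-Q(\hat{\theta}_{n})\le \bigl[Q(\theta^{*})-\hat{Q}(\theta^{*})\bigr]+\bigl[\hat{Q}(\hat{\theta}_{n})-Q(\hat{\theta}_{n})\bigr]\le 2\sup_{\theta}|\hat{Q}-Q|\le 2Cr_{n}.
\end{equation*}
If $|\hat{\theta}_{n}-\theta^{*}|>\varepsilon$, the separation condition forces $Q(\theta^{*})-Q(\hat{\theta}_{n})>\eta$, which contradicts the display once $\eta>2Cr_{n}$. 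We read the hypothesis $\eta=\Omega(r_{n})$ as meaning that $\eta$ dominates $r_{n}$ by a large enough constant, and choose $C$ accordingly. Then $\mathbb{P}(|\hat{\theta}_{n}-\theta^{*}|>\varepsilon)\to0$.

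The main obstacle is this constant-matching. It requires the ratio $\eta/r_{n}$ to exceed $2C$, and $C$ must in turn be large enough for the union bound $|\mathcal{T}|^{1-C^{2}}$ to vanish. If $|\mathcal{T}|$ stays bounded, $|\mathcal{T}|^{1-C^{2}}$ need not vanish for fixed $C$. In that case we instead take $s=\eta/3$ directly in the Hoeffding bound, giving failure probability $2|\mathcal{T}|\exp(-n\eta^{2}/(9|\mathcal{T}|))$, which tends to zero whenever $n\eta^{2}/|\mathcal{T}|-\log|\mathcal{T}|\to\infty$; the $\Omega$-condition is interpreted so as to guarantee this. Existence and measurability of $\hat{\theta}_{n}$ are addressed by working with near-maximisers, since $\hat{Q}$ is continuous on the open set $\Theta_{K}$. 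For a non-singleton $\Theta^{*}_{K}$, the same argument applies with $|\theta-\theta^{*}|$ replaced by the distance to $\Theta^{*}_{K}$.
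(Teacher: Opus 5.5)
Your proposal is correct and follows essentially the same route as the paper: a uniform bound $\sup_{\theta}|\hat{Q}(\theta)-Q(\theta)|\le C\sqrt{|\mathcal{T}|\log|\mathcal{T}|/n}$, obtained by forcing every $n_{\boldsymbol{t}}\ge n/(2|\mathcal{T}|)$ (Bernstein in the paper, Chernoff for you) and then applying conditional Hoeffding with a union bound over cells, followed by the standard well-separated argmax argument. The only difference is in the constants: the paper lets its level $\delta$ grow slowly so that the failure probability $O(|\mathcal{T}|^{-\delta})$ vanishes even when $|\mathcal{T}|$ is bounded, and this, like your $s=\eta/3$ fallback, implicitly needs $\eta$ to dominate $\sqrt{|\mathcal{T}|\log|\mathcal{T}|/n}$ by more than a fixed constant, so you were right to flag that reading of the $\Omega$-hypothesis explicitly.
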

\begin{proof}
We arbitrarily fix a $\delta>0$. We define two events $A$ and $B$ as
\be
A&:=\left\{\sup_{\boldsymbol{t} \in \mathcal{T}}\left|\sqrt{n_{\boldsymbol{t}}}\left(\hat{c}_{\boldsymbol{t}}-c_{\boldsymbol{t}}\right)\right|\leq \sqrt{\frac{(\delta+1)\log|\mathcal{T}|}{2}}\right\},\\
B&:=\left\{\sup_{\boldsymbol{t} \in \mathcal{T}}\left|\frac{n_{\boldsymbol{t}}}{\mathbb{E}[n_{\boldsymbol{t}}]}-1\right|\leq \sqrt{\frac{2(\delta+1)\log |\mathcal{T}|}{\mathbb{E}[n_{\boldsymbol{t}}]}}\right\},
\ee
By a standard application of Hoeffding's inequality ( for instance \textbf{Theorem 2.2.2} in \cite{vershynin2018high}) conditioning on $\{\mathcal{I}_{\boldsymbol{t}}\}_{\{\boldsymbol{t} \in \mathcal{T}\}}$, we obtain that $\mathbb{P}(A)\geq 1-O\left(|\mathcal{T}|^{-\delta}\right).$ We now turn to event $B.$ Under our previous assumption, $n_{\boldsymbol{t}} \sim \text{Bin}\left(n,\frac{1}{|\mathcal{T}|}\right)$ for each $\boldsymbol{t} \in \mathcal{T}$ and hence $\mathbb{E}[n_{\boldsymbol{t}}]=\frac{n}{|\mathcal{T}|}.$ By Bernstein's inequality ( see for instance \cite{bernstein1964modification}), we obtain $\mathbb{P}(B)\geq 1-O\left(|\mathcal{T}|^{-\delta}\right).$ Hence, by an Union bound, we conclude that, $\mathbb{P}\left(A\cap B\right)\geq 1-O\left(|\mathcal{T}|^{-\delta}\right). $ Under the event $B$, for every $\boldsymbol{t}\in \mathcal{T},$
\be
n_{\boldsymbol{t}}& \approx \frac{n}{|\mathcal{T}|}(1+o(1)),\\
& \geq \frac{n}{2|\mathcal{T}|}.
\ee
Hence, working under the event $A \cap B,$ we obtain that for any $\theta \in \Theta_{K}$
\be
|\hat{Q}(\theta)-Q(\theta)|&\leq \sqrt{\frac{(1+\delta)\log |\mathcal{T}|}{2}}\left(\sum_{t \in \mathcal{T}}\frac{\mathbb{P}_{\theta}\left(\boldsymbol{T}=\boldsymbol{t}\right)}{\sqrt{n_{\boldsymbol{t}}}}I_{\{n_{\boldsymbol{t}}\geq 1\}}\right),\\ 
& \leq \sqrt{(1+\delta)\log |\mathcal{T}|}\left(\sum_{t \in \mathcal{T}}\frac{\mathbb{P}_{\theta}\left(\boldsymbol{T}=\boldsymbol{t}\right)}{\sqrt{\frac{n}{|\mathcal{T}|}}}\right),\\
& = \sqrt{\frac{(1+\delta)|\mathcal{T}|\log |\mathcal{T}|}{n}},\\
\implies \sup_{\theta \in \Theta_K}|\hat{Q}(\theta)-Q(\theta)|&\leq \sqrt{\frac{(1+\delta)|\mathcal{T}|\log |\mathcal{T}|}{n}}.
\ee
Fix an $\varepsilon > 0$ and consider the corresponding $\eta > 0$ obtained. For a slowly increasing sequence $\delta$, choose $n$ large enough for which, $$c_{N}:=\sqrt{\frac{(1+\delta)|\mathcal{T}|\log |\mathcal{T}|}{n}} < \frac{\eta}{2}. $$ Hence, the event $|\hat{\theta}_{n}-\theta^{*}|> \varepsilon$ implies
\be
Q(\hat{\theta}_{n})& \leq Q(\theta^{*})-\eta,\\
& \leq \hat{Q}(\theta^{*})+c_{N}-\eta,\\
& \leq \hat{Q}(\hat{\theta}_{n})+c_{N}-\eta,\\
& \leq  \hat{Q}(\hat{\theta}_{n})+c_{N}-2c_{N},\\
\ee
From which we obtain that, 
\be
c_{N}&\leq \hat{Q}(\hat{\theta}_{n})-Q(\hat{\theta}_{n}),\\
&\leq \sup_{\theta \in \Theta_K}|Q(\theta)-\hat{Q}(\theta)|,\\
\ee
This implies that the event $|\hat{\theta}_{n}-\theta^{*}|> \varepsilon$ is a subset of $(A\cap B)^{c}$, from which we obtain, $$\mathbb{P}\left(|\hat{\theta}_{n}-\theta^{*}|< \varepsilon\right) \geq 1-O\left(|\mathcal{T}|^{-\delta}\right).$$ Taking a sequence $\delta$ sufficiently slowly to infinity proves our theorem.
\end{proof}
We now illustrate some remarks that we can deduce from this proof. First of all for a chosen $\lambda > 0$ and $\eta$ as properly defined in the theorem statement, we define the \textit{rate of convergence} for $\hat{\theta}_{n}$ to be
\be
\varepsilon_{\lambda}:=\inf\left\{\varepsilon: \inf_{|\theta- \theta^*|>\varepsilon}\left|Q(\theta)-Q(\theta^*)\right| > \eta\right\}.
\ee
It is very easy to observe from this definition that $\varepsilon_{\lambda}$ is a decreasing function of $\lambda$.
\subsection{Curse of Dimensionality: Restriction on potential outcomes}
In this section we try to provide some formal justification on why the assumption that $|\mathcal{T}|<<n$ is necessary for any meaningful inference on $\theta^{*}$. For simplicity, we assume that the parametric family correspond to that of the multinomial distribution with the parameter space $\Theta_{k}=[0,1]^{K}.$  We denote the set of potential outcomes to be, $$\Gamma:=  \left\{\{c_{\boldsymbol{t}}\}_{\{\boldsymbol{t}\in \{0,1\}^{K}\}}:c_{\boldsymbol{t}} \in (0,1)\text{ for all }\boldsymbol{t}\in \{0,1\}^{K}\right\}. $$ The following theorem provides a mathematical justification to our claim for this special example.
\begin{theorem}
There exist $\varepsilon > 0,$ such that $$\inf_{\hat{\theta}}\sup_{\Tilde{c}\in \Gamma}\mathbb{P}_{\Tilde{c}}\left(|\hat{\theta}-\theta^{*}|>\varepsilon\right)\geq \Omega\left(\exp\left(-\frac{2n}{|\mathcal{T}|}\right)\right), $$ where for each $\Tilde{c}\in \Gamma$ we define  $$\theta^{*}=\arg \max_{\theta \in \Theta_{K}} \left<\Tilde{c}, \mathbb{P}_{\theta}(\boldsymbol{T}=\cdot)\right>.$$
\end{theorem}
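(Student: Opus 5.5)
We argue via a two-point (Le Cam) construction combined with a "no observations in a cell" event. The setting is the product-Bernoulli family with $\Theta_K=[0,1]^K$ and no entropy term. Here $\theta^*$ maximizes a multilinear function of $\theta$, so it sits at the vertex $\boldsymbol{t}^*=\arg\max_{\boldsymbol{t}} \tilde c_{\boldsymbol{t}}$ (when the maximizer is unique). The idea is to make two potential-outcome vectors $\tilde c^{(0)},\tilde c^{(1)}\in\Gamma$ that agree everywhere except on two cells $\boldsymbol{t}_0\neq\boldsymbol{t}_1$.

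\textbf{Construction.} Fix two distinct vertices $\boldsymbol{t}_0,\boldsymbol{t}_1$, say at Hamming distance one, and set $\varepsilon=1/2$. For the baseline, take $\tilde c^{(j)}_{\boldsymbol{t}}=1/4$ for all $\boldsymbol{t}\notin\{\boldsymbol{t}_0,\boldsymbol{t}_1\}$. Under $\tilde c^{(0)}$ put $c_{\boldsymbol{t}_0}=3/4$ and $c_{\boldsymbol{t}_1}=1/4$; under $\tilde c^{(1)}$ swap these values. The maximizers are then $\theta^{*(0)}=\boldsymbol{t}_0$ and $\theta^{*(1)}=\boldsymbol{t}_1$, so $|\theta^{*(0)}-\theta^{*(1)}|\ge 1>2\varepsilon$. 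By the triangle inequality, any estimator $\hat\theta$ satisfies, on every sample, $|\hat\theta-\theta^{*(0)}|>\varepsilon$ or $|\hat\theta-\theta^{*(1)}|>\varepsilon$.

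\textbf{Indistinguishability event.} Let $E=\{n_{\boldsymbol{t}_0}=n_{\boldsymbol{t}_1}=0\}$. The assignment mechanism does not depend on $\tilde c$, and conditional on $\{\mathcal{I}_{\boldsymbol{t}}\}$ the outcomes outside $\boldsymbol{t}_0,\boldsymbol{t}_1$ have identical laws under both hypotheses. Hence on $E$ the data distribution is the same under $\tilde c^{(0)}$ and $\tilde c^{(1)}$. Write $\hat\theta$ as a (possibly randomized) function of the data. Then
\[
\mathbb{P}_{0}(|\hat\theta-\theta^{*(0)}|>\varepsilon)+\mathbb{P}_{1}(|\hat\theta-\theta^{*(1)}|>\varepsilon)\ \ge\ \mathbb{E}\big[\mathbb{P}(\,\cdot\,|E)\text{-mass of both failure sets}\big]\mathbb{P}(E)\ \ge\ \mathbb{P}(E).
\]
This works because, conditional on $E$, the two failure sets cover the whole sample space under the common conditional law. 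Therefore the sup over $\Gamma$ is at least $\mathbb{P}(E)/2$.

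\textbf{Computing $\mathbb{P}(E)$.} With uniform assignment, $\mathbb{P}(E)=(1-2/|\mathcal{T}|)^n$. For $|\mathcal{T}|\ge 4$, the inequality $\log(1-x)\ge -x-x^2$ for $x\le 1/2$ gives $\mathbb{P}(E)\ge \exp(-2n/|\mathcal{T}|-4n/|\mathcal{T}|^2)$. The second term in the exponent is lower order when $|\mathcal{T}|\to\infty$, but it could be an $O(1)$ or even growing factor if $n\gg|\mathcal{T}|^2$, so it has to be handled.

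\textbf{Main obstacle: the stated constant $2$.} I expect the delicate step to be landing exactly on $\exp(-2n/|\mathcal{T}|)$ rather than on $\exp(-(2+o(1))n/|\mathcal{T}|)$. I would try two remedies, in order:
\begin{itemize}
\item Use only one empty cell, $E'=\{n_{\boldsymbol{t}_0}=0\}$. Choose $\tilde c^{(0)}$ with $c_{\boldsymbol{t}_0}=1/8$, so $\boldsymbol{t}_1$ wins, and $\tilde c^{(1)}$ with $c_{\boldsymbol{t}_0}=7/8$, so $\boldsymbol{t}_0$ wins, keeping $c_{\boldsymbol{t}_1}=3/4$ and all other cells at $1/4$. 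The two hypotheses still differ only on $\boldsymbol{t}_0$ and have different maximizers. Now $\mathbb{P}(E')=(1-1/|\mathcal{T}|)^n\ge \exp(-n/|\mathcal{T}|-n/|\mathcal{T}|^2)$.
\item Note that $n/|\mathcal{T}|^2\le n/|\mathcal{T}|$ always holds, which gives $\mathbb{P}(E')\ge \exp(-2n/|\mathcal{T}|)$ exactly.
\end{itemize}
With these, the bound holds with constant $1/2$, yielding the $\Omega(\exp(-2n/|\mathcal{T}|))$ claimed.
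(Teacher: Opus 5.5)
Your argument is the same two-point construction the paper uses: two potential-outcome vectors that differ only on cells which may receive no units, an assignment event on which the two data laws coincide, and the observation that any $\hat{\theta}$ must fail under one of the two hypotheses with probability at least half that event's probability. The paper phrases this covering step through the event $A=\{\hat{\theta}\in[0,1/2)\times[0,1]^{K-1}\}$ together with a WLOG on $\mathbb{P}(A\cap B)$.

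Where you depart from the paper, your version is better. The paper empties two cells and asserts $\mathbb{P}(B)=(1-2/|\mathcal{T}|)^n=\Omega(e^{-2n/|\mathcal{T}|})$. However, $(1-2/|\mathcal{T}|)^n e^{2n/|\mathcal{T}|}\le e^{-2n/|\mathcal{T}|^2}$, which tends to zero when $n\gg|\mathcal{T}|^2$; this is exactly the issue you flagged. Your single-empty-cell variant gives $(1-1/|\mathcal{T}|)^n\ge e^{-2n/|\mathcal{T}|}$ for every $n$ and every $|\mathcal{T}|\ge 2$. So the bound holds uniformly with constant $1/2$, and it also covers $K=1$, where two cells cannot both be empty. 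Your hypotheses also take values in $(0,1)$ and therefore genuinely lie in $\Gamma$. The paper's choices $\beta,\gamma\in\{0,1\}$ and $d_{\boldsymbol{t}}\approx 0$ sit on the boundary of $\Gamma$.

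One slip needs fixing. With $\varepsilon=1/2$, the claim $|\theta^{*(0)}-\theta^{*(1)}|\ge 1>2\varepsilon$ is false: $2\varepsilon=1$, and vertices at Hamming distance one are at distance exactly $1$. The midpoint $\hat{\theta}=(\boldsymbol{t}_0+\boldsymbol{t}_1)/2$ is then at distance exactly $\varepsilon$ from both maximizers. Hence the strict failure events $\{|\hat{\theta}-\theta^{*(j)}|>\varepsilon\}$ do not cover the sample space on $E$ (or $E'$), and the covering step breaks for this estimator. Taking any $\varepsilon<1/2$ repairs it, say $\varepsilon=1/4$ or the paper's $10^{-4}$. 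The rest of your argument then goes through unchanged.
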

\begin{proof}
We denote two treatment combinations, $\boldsymbol{t}_{1}=\{0\}^{K}$ and $\boldsymbol{t}_{2}=\{1\}\times\{0\}^{K-1}.$ For any arbitrary $\beta,\gamma \in \{0,1\},$ we fix an $\Tilde{d}\in \Gamma$ such that $\Tilde{d}=(d_{\boldsymbol{t}}: \boldsymbol{t}\in \{0,1\}^{K})$ with $ d_{\boldsymbol{t}_1}=\beta,d_{\boldsymbol{t}_2}=\gamma$ and $d_{\boldsymbol{t}}\approx 0$ for all $\boldsymbol{t}\in \mathcal{T}\backslash \{\boldsymbol{t}_1,\boldsymbol{t}_2\}.$ We also define the following events,
\be
A&=\left\{\hat{\theta} \in [0,1/2)\times [0,1]^{K-1}\right\},\\
B&=\left\{\mathcal{I}_{\boldsymbol{t}_1}=\mathcal{I}_{\boldsymbol{t}_2}=\phi\right\},\\
\ee
W.L.O.G. we assume that $\mathbb{P}\left(A \cap B\right) \geq \frac{\mathbb{P}\left(B\right)}{2}.$ We note that when $\beta=0,\gamma=1$, the $\theta^{*}$ corresponding to  $\Tilde{d}$ is equal to $(0,1)\times \{0\}^{K-2}.$ Hence, by choosing $\varepsilon=1/10000$, our main argument proceeds as follows
\be
\inf_{\hat{\theta}}\sup_{\Tilde{c}\in \Gamma}\mathbb{P}_{\Tilde{c}}\left(|\hat{\theta}-\theta^{*}|>\varepsilon\right) &\geq \inf_{\hat{\theta}}\sup_{\beta,\gamma \in \{0,1\}} \mathbb{P}_{\Tilde{d}}\left(|\hat{\theta}-\theta^{*}|>\varepsilon\right),\\
&\geq \inf_{\hat{\theta}}\sup_{\beta=0,\gamma=1} \mathbb{P}_{\Tilde{d}}\left(|\hat{\theta}-\theta^{*}|>\varepsilon\right),\\
&\geq \inf_{\hat{\theta}}\sup_{\beta=0,\gamma=1} \mathbb{P}_{\Tilde{d}}\left(|\hat{\theta}-\theta^{*}|>\varepsilon,A\cap B\right),\\
&\geq \mathbb{P}\left(B\right)/2=\Omega\left(\exp\left(-\frac{2n}{|\mathcal{T}|}\right)\right).
\ee
\end{proof}
This proofs our theorem. 
\section{Weighting estimator}
In this section, we deal with the weighting estimator which is defined to be 
\be
\Tilde{Q}(\theta) & =  \frac{1}{n}\sum_{i=1}^nY_i \prod_{k=1}^K
\frac{\mathbb{P}_{\theta}(T_{ik})}{\mathbb{P}(T_{ik})},\\
&=\sum_{\boldsymbol{t}\in \mathcal{T}}\Tilde{c}_{\boldsymbol{t}}\mathbb{P}_{\theta}\left(\boldsymbol{T}=\boldsymbol{t}\right)
\ee
where $ \Tilde{c}_{\boldsymbol{t}}:=\frac{\sum_{i \in \mathcal{I}_{\boldsymbol{t}}}Y_{i}}{n\mathbb{P}(\boldsymbol{T}=\boldsymbol{t})}$ for each $\boldsymbol{t}\in \mathcal{T}$. In our next theorem we compute the expectation and variance of this statistics ( always setting $\lambda=0$).
\begin{theorem}\label{thm:w_var}
We have the following for each $\theta \in \Theta$,
\be
\mathbb{E}[\Tilde{Q}(\theta)]&=Q(\theta),\\
\mathbb{V}[\Tilde{Q}(\theta)]&=\Theta\left(\frac{|\mathcal{T}|\max_{\boldsymbol{t}\in \mathcal{T}} \mathbb{P}_{\theta}\left(\boldsymbol{T}=\boldsymbol{t}\right) }{n}\right).
\ee
\end{theorem}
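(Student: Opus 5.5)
We will treat $\Tilde{Q}(\theta)$ in its first form, as an average of the $n$ i.i.d.\ summands $Z_i:=Y_i\,w_\theta(\boldsymbol{T}_i)$ with $w_\theta(\boldsymbol{t}):=\mathbb{P}_{\theta}(\boldsymbol{T}=\boldsymbol{t})/\mathbb{P}(\boldsymbol{T}=\boldsymbol{t})=|\mathcal{T}|\,\mathbb{P}_{\theta}(\boldsymbol{T}=\boldsymbol{t})$. Under the uniform design the product over $k$ equals this ratio. We then compute the first two moments of a single $Z_i$ and divide by $n$.

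\textbf{Mean.} We condition on $\boldsymbol{T}_i$. By consistency $Y_i=Y_i(\boldsymbol{T}_i)$, and by the randomization assumption $\mathbb{E}[Y_i\mid \boldsymbol{T}_i=\boldsymbol{t}]=\mathbb{E}[Y_i(\boldsymbol{t})]=c_{\boldsymbol{t}}$. Hence
\[
\mathbb{E}[Z_i]=\sum_{\boldsymbol{t}\in\mathcal{T}}\mathbb{P}(\boldsymbol{T}=\boldsymbol{t})\,c_{\boldsymbol{t}}\,\frac{\mathbb{P}_{\theta}(\boldsymbol{T}=\boldsymbol{t})}{\mathbb{P}(\boldsymbol{T}=\boldsymbol{t})}=\sum_{\boldsymbol{t}\in\mathcal{T}}c_{\boldsymbol{t}}\,\mathbb{P}_{\theta}(\boldsymbol{T}=\boldsymbol{t}),
\]
which is $Q(\theta)$ with $\lambda=0$. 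Positivity guarantees that the ratio is well defined.

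\textbf{Variance.} Since $\mathbb{V}[\Tilde{Q}(\theta)]=\mathbb{V}[Z_1]/n$, it suffices to show $\mathbb{V}[Z_1]=\Theta\bigl(|\mathcal{T}|\max_{\boldsymbol{t}}\mathbb{P}_{\theta}(\boldsymbol{T}=\boldsymbol{t})\bigr)$. Write $p_{\boldsymbol{t}}:=\mathbb{P}_{\theta}(\boldsymbol{T}=\boldsymbol{t})$. Using $Y_i^2=Y_i$ (Bernoulli outcomes), we get
\[
\mathbb{E}[Z_1^2]=\sum_{\boldsymbol{t}}\frac{1}{|\mathcal{T}|}\,c_{\boldsymbol{t}}\,|\mathcal{T}|^2p_{\boldsymbol{t}}^2=|\mathcal{T}|\sum_{\boldsymbol{t}}c_{\boldsymbol{t}}p_{\boldsymbol{t}}^2 .
\]
For the upper bound, $c_{\boldsymbol{t}}\le 1$ gives $\sum_{\boldsymbol{t}} c_{\boldsymbol{t}}p_{\boldsymbol{t}}^2\le \max_{\boldsymbol{t}} p_{\boldsymbol{t}}\sum_{\boldsymbol{t}} p_{\boldsymbol{t}}=\max_{\boldsymbol{t}} p_{\boldsymbol{t}}$, so $\mathbb{V}[Z_1]\le \mathbb{E}[Z_1^2]\le|\mathcal{T}|\max_{\boldsymbol{t}} p_{\boldsymbol{t}}$.

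\textbf{Lower bound.} This is the delicate step. First, $\mathbb{V}[Z_1]=\mathbb{E}[Z_1^2]-Q(\theta)^2\ge \mathbb{E}[Z_1^2]-1$. Second, the sum $\sum_{\boldsymbol{t}}c_{\boldsymbol{t}}p_{\boldsymbol{t}}^2$ is not comparable to $\max_{\boldsymbol{t}} p_{\boldsymbol{t}}$ without further assumptions, since the mass may sit where $c_{\boldsymbol{t}}$ is tiny, and even $\sum_{\boldsymbol{t}} p_{\boldsymbol{t}}^2$ may be much smaller than $\max_{\boldsymbol{t}} p_{\boldsymbol{t}}$. We will therefore make the implicit assumptions explicit, consistent with the paper's parameter space: the $c_{\boldsymbol{t}}$ are bounded away from $0$ and $1$ uniformly, say in $[c_0,1-c_0]$, and we read the $\Theta(\cdot)$ at the level of the conditional (within-cell) variance.

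Concretely, we decompose $\mathbb{V}[Z_1]=\mathbb{E}\bigl[\mathbb{V}(Z_1\mid\boldsymbol{T}_1)\bigr]+\mathbb{V}\bigl(\mathbb{E}[Z_1\mid\boldsymbol{T}_1]\bigr)$. The first term equals $|\mathcal{T}|\sum_{\boldsymbol{t}}c_{\boldsymbol{t}}(1-c_{\boldsymbol{t}})p_{\boldsymbol{t}}^2$, which is at least $c_0^2\,|\mathcal{T}|\sum_{\boldsymbol{t}}p_{\boldsymbol{t}}^2\ge c_0^2\,|\mathcal{T}|\bigl(\max_{\boldsymbol{t}} p_{\boldsymbol{t}}\bigr)^2$. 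To upgrade this to a bound linear in $\max_{\boldsymbol{t}} p_{\boldsymbol{t}}$, we expect to need either $\max_{\boldsymbol{t}} p_{\boldsymbol{t}}$ bounded below, as in the near-degenerate optima of interest, or to interpret the statement as matching the upper bound up to the factor $\sum_{\boldsymbol{t}} p_{\boldsymbol{t}}^2/\max_{\boldsymbol{t}} p_{\boldsymbol{t}}$.

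We will state this caveat explicitly and prove the two-sided bound under the assumption that $\max_{\boldsymbol{t}} p_{\boldsymbol{t}}$ is bounded below. In that regime the conditional-variance term is at least $c_0^2|\mathcal{T}|(\max_{\boldsymbol{t}} p_{\boldsymbol{t}})^2=\Omega\bigl(|\mathcal{T}|\max_{\boldsymbol{t}} p_{\boldsymbol{t}}\bigr)$. Combined with the upper bound, this gives the claimed $\Theta$ rate after dividing by $n$.
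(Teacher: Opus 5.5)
Your mean computation and your upper bound are correct, and you take a different route from the paper's, and a cleaner one. The paper conditions on the whole assignment vector and applies the law of total variance to the cell estimates $\tilde c_{\boldsymbol{t}}$. It computes the within-cell term under a common variance $\sigma^2$, which does not fit heterogeneous Bernoulli means. It bounds the between-cell term through $\mathbb{V}(n_{\boldsymbol{t}})$ after discarding the negative multinomial covariances. You instead write $\tilde Q(\theta)$ as an average of i.i.d.\ summands and condition only on $\boldsymbol{T}_1$. That gives the exact identity
\[
n\,\mathbb{V}[\tilde Q(\theta)]=|\mathcal{T}|\sum_{\boldsymbol{t}\in\mathcal{T}}c_{\boldsymbol{t}}p_{\boldsymbol{t}}^2-Q(\theta)^2
\]
with no covariance bookkeeping. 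The paper's scheme does buy reusability: it carries over directly to the adaptive estimator $\hat Q$, where the $1/n_{\boldsymbol{t}}$ normalization destroys the i.i.d.\ structure.

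Your caveat about the lower bound is justified, and the paper does not close it either. Its within-cell term is computed exactly as $\sigma^2|\mathcal{T}|\sum_{\boldsymbol{t}}p_{\boldsymbol{t}}^2/n$ and then bounded above via $\sum_{\boldsymbol{t}}p_{\boldsymbol{t}}^2\le\max_{\boldsymbol{t}}p_{\boldsymbol{t}}$. Its between-cell term is only bounded above. After that, $\Theta$ is simply asserted. The only lower bound either computation gives is of order $|\mathcal{T}|\sum_{\boldsymbol{t}}p_{\boldsymbol{t}}^2/n$, and when $K$ grows this can be exponentially smaller than the claimed rate. In the product model with every $\theta_k=1/\sqrt{2}$,
\[
\frac{\sum_{\boldsymbol{t}}p_{\boldsymbol{t}}^2}{\max_{\boldsymbol{t}}p_{\boldsymbol{t}}}=\prod_{k=1}^{K}\frac{\theta_k^2+(1-\theta_k)^2}{\theta_k}=(2\sqrt{2}-2)^K\to 0,
\]
so the statement as written is false in the growing-$K$ regime the paper cares about. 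Some repair is needed. The one you chose, $\max_{\boldsymbol{t}}p_{\boldsymbol{t}}$ bounded below, essentially confines you to near-deterministic $\theta$. Under your assumption $c_{\boldsymbol{t}}\in[c_0,1-c_0]$ alone, your own two steps already give
\[
c_0(1-c_0)\,\frac{|\mathcal{T}|\sum_{\boldsymbol{t}}p_{\boldsymbol{t}}^2}{n}\le\mathbb{V}[\tilde Q(\theta)]\le\frac{|\mathcal{T}|\sum_{\boldsymbol{t}}p_{\boldsymbol{t}}^2}{n},
\]
uniformly in $\theta$ and $K$. The upper bound comes from $\mathbb{E}[Z_1^2]=|\mathcal{T}|\sum_{\boldsymbol{t}}c_{\boldsymbol{t}}p_{\boldsymbol{t}}^2$ and the lower bound from the within-cell term. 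This is the sharp form of the result, and it matches the quantity the paper itself uses later in its CLT and high-dimensional sections. The $\max_{\boldsymbol{t}}p_{\boldsymbol{t}}$ version survives only as an upper bound, so I would present the two-sided $\sum_{\boldsymbol{t}}p_{\boldsymbol{t}}^2$ bound as the theorem.
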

\begin{proof}
Let $\hat{\mathbb{E}}, \hat{\mathbb{V}}$ denote the conditional expectation and variance conditional on the vector $\left\{I_{\{i \in \mathcal{I}_{\boldsymbol{t}}\}}\right\}_{\{\boldsymbol{t}\in \mathcal{T},i \in [n]\}} $. Through simple computations, we can easily have that for each $\boldsymbol{t}\in \mathcal{T}$,
\be 
\hat{\mathbb{E}}[\Tilde{c}_{\boldsymbol{t}}]&=\frac{c_{\boldsymbol{t}}n_{\boldsymbol{t}}}{n\mathbb{P}\left(\boldsymbol{T}=\boldsymbol{t}\right) },\\
\hat{\mathbb{V}}\left(\Tilde{c}_{\boldsymbol{t}}\right)&=\frac{\sigma^2n_{\boldsymbol{t}} }{(n\mathbb{P}(\boldsymbol{T}=\boldsymbol{t}))^2 }
\ee
Under the additional assumption that $\mathbb{V}[Y_{i}]=\sigma^2$ for all $i$. Taking unconditional expectation again in the first relation provides us that $\mathbb{E}[\Tilde{c}_{\boldsymbol{t}}]=c_{\boldsymbol{t}}$ and from this we easily have that $\mathbb{E}[\Tilde{Q}(\theta)]=Q(\theta). $ Using the second relation we can obtain in a similar way that  
\be
\mathbb{E}[\hat{\mathbb{V}}(\Tilde{Q}(\theta))]&=\frac{\sigma^2|\mathcal{T}|}{n}\sum_{\boldsymbol{t}\in \mathcal{T}}\left(\mathbb{P}_{\theta}(\boldsymbol{T}=\boldsymbol{t})\right)^2,\\
&\leq \frac{\sigma^2|\mathcal{T}|\max_{\boldsymbol{t}\in \mathcal{T}} \mathbb{P}_{\theta}\left(\boldsymbol{T}=\boldsymbol{t}\right)}{n}.
\ee
Now we observe that,
\be
\mathbb{V}(\hat{\mathbb{E}}[\Tilde{Q}(\theta))])&=\mathbb{V}\left(\sum_{\boldsymbol{t}\in \mathcal{T}}\frac{c_{\boldsymbol{t}}n_{\boldsymbol{t}}\mathbb{P}_{\theta}(\boldsymbol{T}=\boldsymbol{t})}{n \mathbb{P}(\boldsymbol{T}=\boldsymbol{t})}\right),\\
& \leq \sum_{\boldsymbol{t}\in \mathcal{T}}\frac{c^{2}_{\boldsymbol{t}}\mathbb{V}(n_{\boldsymbol{t}})(\mathbb{P}_{\theta}(\boldsymbol{T}=\boldsymbol{t}))^2}{(n \mathbb{P}(\boldsymbol{T}=\boldsymbol{t}) )^2},\\
&\leq  \frac{|\mathcal{T}|}{n}\sum_{\boldsymbol{t}\in \mathcal{T}}c^{2}_{\boldsymbol{t}}(\mathbb{P}_{\theta}(\boldsymbol{T}=\boldsymbol{t}))^2,\\
& \leq  \frac{|\mathcal{T}|\max_{\boldsymbol{t}\in \mathcal{T}} \mathbb{P}_{\theta}\left(\boldsymbol{T}=\boldsymbol{t}\right)}{n}\mathbb{E}_{\theta}[c^2_{\boldsymbol{t}}].
\ee
We now use the following relation that,
\be
\mathbb{V}[\Tilde{Q}(\theta)]&=\mathbb{V}(\hat{\mathbb{E}}[\Tilde{Q}(\theta))])+\mathbb{E}[\hat{\mathbb{V}}(\Tilde{Q}(\theta))],\\
&=\Theta\left(\frac{|\mathcal{T}|\max_{\boldsymbol{t}\in \mathcal{T}} \mathbb{P}_{\theta}\left(\boldsymbol{T}=\boldsymbol{t}\right) }{n}\right).
\ee
\end{proof}
Now, we try to obtain lower bound in terms of variance for any \textit{unbiased estimator} of the parameter $Q(\boldsymbol{\theta})$. We work under the assumption that observations coming from distinct treatment combinations are independent and for any treatment combination $\boldsymbol{t} \in \mathcal{T}$, we have $$Y^{(\boldsymbol{t})}_{1}, \dots,Y^{(\boldsymbol{t})}_{n_{\boldsymbol{t}}} \stackrel{i.i.d}{\sim} \mathcal{N}\left(c_{\boldsymbol{t}},\sigma^{2}_{\boldsymbol{t}}\right).$$ We denote $\boldsymbol{c}=(c_{\boldsymbol{t}})_{\boldsymbol{t}\in \mathcal{T}}$ and use this to denote: $$\Psi(\boldsymbol{c})=\sum_{\boldsymbol{t}\in \mathcal{T}}c_{\boldsymbol{t}} \mathbb{P}_{\theta}\left(\boldsymbol{T}=\boldsymbol{t}\right) $$ also we denote $\boldsymbol{Y}$ to denote the set of observations. An equivalent way of stating our assumption is that $Y_{1},\dots, Y_{n}$ are i.i.d samples from a mixture distribution of $|\mathcal{T}|$ many normal distributions with uniform weights i.e, $$Y_{1} \sim \sum_{ \boldsymbol{t}\in \mathcal{T}}\frac{1}{|\mathcal{T}|}\mathcal{N}\left(c_{\boldsymbol{t}},\sigma^{2}_{\boldsymbol{t}}\right) $$  We now move into our next theorem.
\begin{theorem}
Under our previous assumptions and the fact that $\sup_{\boldsymbol{t} \in \mathcal{T}}\left\{|c_{\boldsymbol{t}}|,\sigma_{\boldsymbol{t}}\right\} \leq 1$, for any unbiased estimator $T(\boldsymbol{Y})$ of $Q(\theta)$ we have,
$$\text{Var}\left(T(\boldsymbol{Y})\right) \geq \frac{C|\mathcal{T}|}{n}\sum_{\boldsymbol{t}\in \mathcal{T}}\left(\mathbb{P}_{\boldsymbol{\theta}} \left(\boldsymbol{T}=\boldsymbol{t}\right)\right)^2. $$
Under the additional assumption that $\frac{|\mathcal{T}|}{n}=o(1)$, we obtain from this relation that
$$\frac{\left(\Tilde{Q}(\boldsymbol{\theta})-Q(\boldsymbol{\theta})\right)}{\sqrt{\mathbb{V}[\Tilde{Q}(\theta)]}} \stackrel{d}{\to} \mathcal{N}(0,1). $$
\end{theorem}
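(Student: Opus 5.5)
The plan is to prove the variance lower bound with a Cramér--Rao argument for the linear functional $\Psi(\boldsymbol{c})$ of the mean vector, and then to get the CLT from a Lindeberg/Lyapunov argument for $\Tilde{Q}(\boldsymbol{\theta})$.

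\textbf{Variance bound.} Treat $\boldsymbol{c}\in\mathbb{R}^{|\mathcal{T}|}$ as the unknown parameter, with the $\sigma_{\boldsymbol{t}}$ known; adding nuisance parameters can only increase the bound, so this is harmless. Under the stated mixture model, the $Y_i$ are i.i.d.\ with density $p(y)=|\mathcal{T}|^{-1}\sum_{\boldsymbol{t}}\phi_{\sigma_{\boldsymbol{t}}}(y-c_{\boldsymbol{t}})$. Computing the Fisher information of the mixture directly is awkward, so I would pass to the complete-data experiment in which the labels $\boldsymbol{T}_i$ are also observed. The information of the observed data is dominated by the complete-data information, and the latter is diagonal:
\[
I(\boldsymbol{c})=\mathrm{diag}\!\left(\frac{n}{|\mathcal{T}|\,\sigma_{\boldsymbol{t}}^{2}}\right),
\]
since $\mathbb{E}[n_{\boldsymbol{t}}]=n/|\mathcal{T}|$. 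The multiparameter Cramér--Rao inequality for an unbiased estimator of $\Psi(\boldsymbol{c})$ then gives
\[
\mathrm{Var}(T)\ \ge\ \nabla\Psi^{\top}I(\boldsymbol{c})^{-1}\nabla\Psi=\frac{|\mathcal{T}|}{n}\sum_{\boldsymbol{t}}\sigma_{\boldsymbol{t}}^{2}\,\mathbb{P}_{\boldsymbol{\theta}}(\boldsymbol{T}=\boldsymbol{t})^{2},
\]
because $\nabla\Psi=(\mathbb{P}_{\boldsymbol{\theta}}(\boldsymbol{T}=\boldsymbol{t}))_{\boldsymbol{t}}$. To reach the stated form I need $\sigma_{\boldsymbol{t}}^2\ge C$, that is, a lower bound on the variances. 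The hypothesis $\sigma_{\boldsymbol{t}}\le1$ only gives an upper bound, so I will interpret $C$ as $\min_{\boldsymbol{t}}\sigma_{\boldsymbol{t}}^2$ (or assume this is bounded away from $0$). I expect this to be the main obstacle: the inequality fails if some $\sigma_{\boldsymbol{t}}\to0$, so this assumption has to be made explicit.

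A subtlety is that observing the labels enlarges the class of estimators. An unbiased $T(\boldsymbol{Y})$ is still unbiased in the complete-data model, so its variance is bounded below by the complete-data bound. This is the right direction for the argument.

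\textbf{Asymptotic normality.} Write $\Tilde{Q}(\boldsymbol{\theta})=n^{-1}\sum_i Z_i$ with
\[
Z_i=Y_i\,|\mathcal{T}|\,\mathbb{P}_{\boldsymbol{\theta}}(\boldsymbol{T}_i),
\]
which are i.i.d.\ with mean $Q(\boldsymbol{\theta})$ by \cref{thm:w_var}. This is a triangular array, because $|\mathcal{T}|$ may grow with $n$. I would verify Lyapunov's condition with third moments. Set $p_{\boldsymbol{t}}=\mathbb{P}_{\boldsymbol{\theta}}(\boldsymbol{T}=\boldsymbol{t})$. Using $|c_{\boldsymbol{t}}|,\sigma_{\boldsymbol{t}}\le1$ and Gaussian moments,
\[
\mathbb{E}|Z_i|^3\lesssim|\mathcal{T}|^2\sum_{\boldsymbol{t}}p_{\boldsymbol{t}}^3\le|\mathcal{T}|^2\max_{\boldsymbol{t}}p_{\boldsymbol{t}}\sum_{\boldsymbol{t}}p_{\boldsymbol{t}}^2.
\]
For the variance, the lower bound just proved gives $\mathrm{Var}(Z_i)\gtrsim|\mathcal{T}|\sum_{\boldsymbol{t}}p_{\boldsymbol{t}}^2$; this is where the first part is used. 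The Lyapunov ratio is therefore at most of order
\[
\frac{n|\mathcal{T}|^2\max_{\boldsymbol{t}}p_{\boldsymbol{t}}\sum_{\boldsymbol{t}}p_{\boldsymbol{t}}^2}{n^{3/2}|\mathcal{T}|^{3/2}\left(\sum_{\boldsymbol{t}}p_{\boldsymbol{t}}^2\right)^{3/2}}=\sqrt{\frac{|\mathcal{T}|}{n}}\cdot\frac{\max_{\boldsymbol{t}}p_{\boldsymbol{t}}}{\left(\sum_{\boldsymbol{t}}p_{\boldsymbol{t}}^2\right)^{1/2}}\le\sqrt{\frac{|\mathcal{T}|}{n}}.
\]
The last step uses $\max_{\boldsymbol{t}}p_{\boldsymbol{t}}\le(\sum_{\boldsymbol{t}}p_{\boldsymbol{t}}^2)^{1/2}$. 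This is $o(1)$ under $|\mathcal{T}|/n=o(1)$, so Lyapunov's CLT gives the stated convergence to $\mathcal{N}(0,1)$.
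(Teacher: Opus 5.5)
Your proposal is correct and follows the paper's own route: a Cram\'er--Rao bound for the linear functional $\Psi(\boldsymbol{c})$, which the paper only gestures at with ``try to implement'' and which you actually carry out through the diagonal complete-data information $n/(|\mathcal{T}|\sigma_{\boldsymbol{t}}^2)$. This is followed by Lyapunov's condition for the i.i.d.\ summands, with the variance lower bound supplying the denominator, exactly as in the paper's $(2+\delta)$-moment computation specialized to $\delta=1$. You are also right that the stated form needs $\min_{\boldsymbol{t}}\sigma_{\boldsymbol{t}}^2\ge C>0$: the hypothesis $\sigma_{\boldsymbol{t}}\le 1$ alone does not give it, and the paper's Lyapunov step relies on it implicitly. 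The only cosmetic fix is to center $Z_i$ in the third moment, which costs nothing since $|Q(\boldsymbol{\theta})|^3\le 1\le|\mathcal{T}|^2\sum_{\boldsymbol{t}}p_{\boldsymbol{t}}^3$.
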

\begin{proof}
Try to implement \textit{Cramer-Rao inequality} for proving the lower bound. Now we attempt to provide a \textit{Central Limit Theorem} type result for our weighting estimator. For each $i=1.2.\dots,n$, we denote  $$X^{\theta}_{i}=Y_{i} \prod_{k=1}^K
\frac{\mathbb{P}_{\theta}(T_{ik})}{\mathbb{P}(T_{ik})}.$$ Then $X^{\theta}_{1},\dots, X^{\theta}_{n}$ denote $n$ i.i.d samples with 
\be 
\mathbb{E}[X^{\theta}_{1}]&=\Tilde{Q}(\theta),\\
\mathbb{V}[X^{\theta}_{1}]&=|\mathcal{T}|\sum_{\boldsymbol{t}\in \mathcal{T}}\sigma^{2}_{\boldsymbol{t}}\left(\mathbb{P}_{\theta}(\boldsymbol{T}=\boldsymbol{t})\right)^{2}-\left(\Tilde{Q}(\theta)\right)^{2}\\
&=O\left(|\mathcal{T}|\sum_{\boldsymbol{t}\in \mathcal{T}}\left(\mathbb{P}_{\theta}(\boldsymbol{T}=\boldsymbol{t})\right)^{2}\right),
\ee
Analogously, one can also obtain that,
\be
\mathbb{E}\left[\left|X^{\theta}_{1}-\Tilde{Q}(\theta)\right|^{2+\delta}\right]&=O\left(\mathbb{E}\left[|X|^{2+\delta}\right]\right),\\
&=O\left(|\mathcal{T}|^{1+\delta}\sum_{\boldsymbol{t}\in \mathcal{T}}\left(\mathbb{P}_{\theta}(\boldsymbol{T}=\boldsymbol{t})\right)^{2+\delta}\right)
\ee
Now, we introduce $s^{2}_{n}=\sum_{i=1}^{n}\mathbb{V}[X^{\theta}_{1}]$ which with the help of above relations, help us to conclude that $$s^{2}_{n}=O\left(n|\mathcal{T}|\sum_{\boldsymbol{t}\in \mathcal{T}}\left(\mathbb{P}_{\theta}(\boldsymbol{T}=\boldsymbol{t})\right)^{2}\right) $$
Finally, in order to proof the second part of the statement of the theorem, we confirm the \textbf{Lyapunov's condition} for some $\delta > 0$ for which one needs to obtain
\be
V_{n}&=\frac{\sum_{i=1}^{n} \mathbb{E}[|X^{\theta}_{i}-\Tilde{Q}(\theta)|^{2+\delta}] }{s^{2+\delta}_{n}},\\
&=\frac{n\mathbb{E}\left[|X^{\theta}_{1}-\Tilde{Q}(\theta)|^{2+\delta}\right]}{s^{2+\delta}_{n}},\\
&=\frac{O\left(n|\mathcal{T}|^{1+\delta}\sum_{\boldsymbol{t}\in \mathcal{T}}\left(\mathbb{P}_{\theta}(\boldsymbol{T}=\boldsymbol{t})\right)^{2+\delta}\right)}{O\left(n^{1+\delta/2}|\mathcal{T}|^{1+\delta/2}\left(\sum_{\boldsymbol{t}\in \mathcal{T}}\left(\mathbb{P}_{\theta}(\boldsymbol{T}=\boldsymbol{t})\right)^{2}\right)^{1+\delta/2}\right)},\\
&= O\left(\left\{\frac{|\mathcal{T}|}{n}\right\}^{\delta/2}\right),\\
&=o(1).
\ee
\end{proof}
It would be very helpful to provide a \textit{Uniform Central Limit Theorem} type result for the weighting estimator. As indicated previously, we denote $ \Tilde{c}_{\boldsymbol{t}}:=\frac{\sum_{i \in \mathcal{I}_{\boldsymbol{t}}}Y_{i}}{n\mathbb{P}(\boldsymbol{T}=\boldsymbol{t})}$ for each $\boldsymbol{t}\in \mathcal{T}$. This provides us with the following relation
\be
\Tilde{c}_{\boldsymbol{t}}-c_{\boldsymbol{t}}=\frac{\sum_{i \in \mathcal{I}_{\boldsymbol{t}} }\left(y_{i}-c_{\boldsymbol{t}}\right)   }{n\mathbb{P}\left(\boldsymbol{T}=\boldsymbol{t}\right) }+\frac{\left(n_{\boldsymbol{t}}-n\mathbb{P}\left(\boldsymbol{T}=\boldsymbol{t}\right)\right)c_{\boldsymbol{t}}  }{n\mathbb{P}\left(\boldsymbol{T}=\boldsymbol{t}\right)}
\ee
This provides us with the following relation,
\be
\sqrt{\frac{n}{|\mathcal{T}|}}\left(\Tilde{Q}(\boldsymbol{\theta})-Q(\boldsymbol{\theta})\right)=&\sum_{\boldsymbol{t} \in \mathcal{T}}\sqrt{\frac{n_{\boldsymbol{t}} }{n\mathbb{P}(\boldsymbol{T}=\boldsymbol{t})}}\left(\frac{\sum_{i \in \mathcal{I}_{\boldsymbol{t}} }\left(y_{i}-c_{\boldsymbol{t}}\right)   }{\sqrt{n_ {\boldsymbol{t}} }\sigma_{\boldsymbol{t}} }\right)\sigma_{\boldsymbol{t}}\mathbb{P}_{\boldsymbol{\theta}}\left(\boldsymbol{T}=\boldsymbol{t}\right)\\
&+\sum_{\boldsymbol{t} \in \mathcal{T}}\left(\frac{\left(n_{\boldsymbol{t}}-n\mathbb{P}\left(\boldsymbol{T}=\boldsymbol{t}\right)\right) }{\sqrt{n\mathbb{P}\left(\boldsymbol{T}=\boldsymbol{t}\right)}}\right)c_{\boldsymbol{t}}\mathbb{P}_{\boldsymbol{\theta}}\left(\boldsymbol{T}=\boldsymbol{t}\right)
\ee
For addressing the high dimensional case, we break our analysis into two cases. The first term can be bounded as
\be
\left|\sum_{\boldsymbol{t} \in \mathcal{T}}\sqrt{\frac{n_{\boldsymbol{t}} }{n\mathbb{P}(\boldsymbol{T}=\boldsymbol{t})}}\left(\frac{\sum_{i \in \mathcal{I}_{\boldsymbol{t}} }\left(y_{i}-c_{\boldsymbol{t}}\right)   }{\sqrt{n_ {\boldsymbol{t}} }\sigma_{\boldsymbol{t}} }\right)\sigma_{\boldsymbol{t}}\mathbb{P}_{\boldsymbol{\theta}}\left(\boldsymbol{T}=\boldsymbol{t}\right)\right| & \leq \max_{\boldsymbol{t}}\mathbb{P}_{\theta}(\boldsymbol{T}=\boldsymbol{t})\left|\mathcal{N}\left(0,|\mathcal{T}|\right)\right|\\
& = O_{p}\left(\max_{\boldsymbol{t}}\mathbb{P}_{\theta}(\boldsymbol{T}=\boldsymbol{t})\sqrt{|\mathcal{T}|}\right).
\ee
Now, we deal with the second term. By applying a standard CLT result about the multinomial count vector and then using the High Dimensional version of  Cram\'er-Wold device, we obtain
\be
\left|\sum_{\boldsymbol{t} \in \mathcal{T}}\left(\frac{\left(n_{\boldsymbol{t}}-n\mathbb{P}\left(\boldsymbol{T}=\boldsymbol{t}\right)\right) }{\sqrt{n\mathbb{P}\left(\boldsymbol{T}=\boldsymbol{t}\right)}}\right)c_{\boldsymbol{t}}\mathbb{P}_{\boldsymbol{\theta}}\left(\boldsymbol{T}=\boldsymbol{t}\right)\right| & \leq \left|\mathcal{N}\left(0,\sum_{\boldsymbol{t} \in \mathcal{T}}\left(\mathbb{P}_{\boldsymbol{\theta}}\left(\boldsymbol{T}=\boldsymbol{t}\right)\right)^{2}\right)\right|\\
&=  O_{p}\left(\sqrt{\max_{\boldsymbol{t}}\mathbb{P}_{\theta}(\boldsymbol{T}=\boldsymbol{t})}\right).
\ee
Hence, combining both these quantities we obtain
$$\left|\sqrt{\frac{n}{|\mathcal{T}|}}\left(\Tilde{Q}(\boldsymbol{\theta})-Q(\boldsymbol{\theta})\right)\right|=O_{p}\left(\max_{\boldsymbol{t}}\mathbb{P}_{\theta}(\boldsymbol{T}=\boldsymbol{t})\sqrt{|\mathcal{T}|}\right)+O_{p}\left(\sqrt{\max_{\boldsymbol{t}}\mathbb{P}_{\theta}(\boldsymbol{T}=\boldsymbol{t})}\right) $$
uniformly over $\theta$. Hence, by suitably controlling $\theta$ we can force the RHS to be $o_{p}(1)$. An interesting next step would be to compare the bias and variance with our adaptive estimator. We do this in the following 
\begin{theorem}
We have the following for each $\theta \in \Theta$,
\be
\mathbb{E}[\hat{Q}(\theta)]&=Q(\theta)\left(1-\exp{\left(-\frac{n}{|\mathcal{T}|}\right)}\right),\\
\mathbb{V}[\Tilde{Q}(\theta)]&=\Theta\left(\min\left\{\sigma^2,\frac{\sigma^2|\mathcal{T}|}{n}\right\}\max_{\boldsymbol{t}\in \mathcal{T}} \mathbb{P}_{\theta}\left(\boldsymbol{T}=\boldsymbol{t}\right)+\exp{\left(-\frac{n}{|\mathcal{T}|}\right)}\sum_{\boldsymbol{t}\in \mathcal{T}}c^{2}_{\boldsymbol{t}}(\mathbb{P}_{\theta}(\boldsymbol{T}=\boldsymbol{t}))^2\right).
\ee
\end{theorem}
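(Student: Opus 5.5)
The plan is to handle the expectation directly and the variance through the law of total variance, conditioning (as in Theorem~\ref{thm:w_var}) on the assignment indicators $\{I_{\{i\in\mathcal{I}_{\boldsymbol{t}}\}}\}$. Throughout I set $\lambda=0$, write $p_{\boldsymbol{t}}:=\mathbb{P}_{\theta}(\boldsymbol{T}=\boldsymbol{t})$ and $q:=1/|\mathcal{T}|$, and assume a common outcome variance $\sigma^2$. I read the second display as a statement about $\mathbb{V}[\hat{Q}(\theta)]$; this is the adaptive estimator the theorem is about, and the symbol $\Tilde{Q}$ there looks like a typo.

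\textbf{Expectation.} Conditionally on the assignment, $\hat{c}_{\boldsymbol{t}}$ is an average of $n_{\boldsymbol{t}}$ i.i.d.\ outcomes with mean $c_{\boldsymbol{t}}$ on the event $\{n_{\boldsymbol{t}}\geq 1\}$, and it is $0$ otherwise. Hence
\[
\hat{\mathbb{E}}[\hat{c}_{\boldsymbol{t}}]=c_{\boldsymbol{t}}I_{\{n_{\boldsymbol{t}}\geq 1\}},
\qquad
\mathbb{E}[\hat{c}_{\boldsymbol{t}}]=c_{\boldsymbol{t}}\bigl(1-(1-q)^n\bigr).
\]
Since $n_{\boldsymbol{t}}\sim\text{Bin}(n,q)$ and $(1-q)^n=\exp(-n/|\mathcal{T}|)(1+O(n/|\mathcal{T}|^2))$, linearity gives $\mathbb{E}[\hat{Q}(\theta)]=Q(\theta)(1-(1-q)^n)$. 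This agrees with the stated formula up to replacing $(1-q)^n$ by $e^{-n/|\mathcal{T}|}$, and I would state it with that approximation made explicit.

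\textbf{Expected conditional variance.} Given the assignment, the $\hat{c}_{\boldsymbol{t}}$ are conditionally independent across $\boldsymbol{t}$, with $\hat{\mathbb{V}}(\hat{c}_{\boldsymbol{t}})=\frac{\sigma^2}{n_{\boldsymbol{t}}}I_{\{n_{\boldsymbol{t}}\geq1\}}$. Therefore
\[
\mathbb{E}[\hat{\mathbb{V}}(\hat{Q}(\theta))]=\sigma^2\sum_{\boldsymbol{t}\in\mathcal{T}}p_{\boldsymbol{t}}^2\,\mathbb{E}\bigl[n_{\boldsymbol{t}}^{-1}I_{\{n_{\boldsymbol{t}}\geq1\}}\bigr].
\]
I would then prove the binomial fact $\mathbb{E}[n_{\boldsymbol{t}}^{-1}I_{\{n_{\boldsymbol{t}}\geq1\}}]\asymp\min\{1,|\mathcal{T}|/n\}$.
\begin{itemize}
\item For the upper bound, use $\frac{1}{k}\leq\frac{2}{k+1}$ and the identity $\mathbb{E}[1/(n_{\boldsymbol{t}}+1)]=\frac{1-(1-q)^{n+1}}{(n+1)q}$.
\item For the lower bound, use Jensen on the event $\{n_{\boldsymbol{t}}\geq1\}$ together with $\mathbb{E}[n_{\boldsymbol{t}}\mid n_{\boldsymbol{t}}\geq1]\leq 1+nq$.
\end{itemize}
Combining this with $\max_{\boldsymbol{t}}p_{\boldsymbol{t}}^2\leq\sum_{\boldsymbol{t}}p_{\boldsymbol{t}}^2\leq\max_{\boldsymbol{t}}p_{\boldsymbol{t}}$ gives the first term of the claimed order. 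As in Theorem~\ref{thm:w_var}, the $\Theta(\cdot)$ is really of order $\sum_{\boldsymbol{t}}p_{\boldsymbol{t}}^2$, which is matched by $\max_{\boldsymbol{t}} p_{\boldsymbol{t}}$ only up to this sandwich; I would state the sharp form first and then relax it.

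\textbf{Variance of the conditional mean.} This is $\mathbb{V}\bigl(\sum_{\boldsymbol{t}}c_{\boldsymbol{t}}p_{\boldsymbol{t}}I_{\{n_{\boldsymbol{t}}\geq1\}}\bigr)$. Its diagonal part is $\sum_{\boldsymbol{t}}c_{\boldsymbol{t}}^2p_{\boldsymbol{t}}^2\pi(1-\pi)$, where $\pi=1-(1-q)^n$. In the regime $n/|\mathcal{T}|\to\infty$ relevant here, $\pi(1-\pi)\asymp e^{-n/|\mathcal{T}|}$, which produces the second term.

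The main obstacle is the off-diagonal part, because the indicators of the multinomial counts are dependent. For $\boldsymbol{t}\neq\boldsymbol{s}$, inclusion–exclusion gives
\[
\text{Cov}\bigl(I_{\{n_{\boldsymbol{t}}\geq1\}},I_{\{n_{\boldsymbol{s}}\geq1\}}\bigr)=(1-2q)^n-(1-q)^{2n}\leq0.
\]
This is negative and of magnitude roughly $(n q^2)\,e^{-2n/|\mathcal{T}|}$, so the cross terms are $O\bigl(nq^2e^{-2nq}\bigl(\sum_{\boldsymbol{t}}c_{\boldsymbol{t}}p_{\boldsymbol{t}}\bigr)^2\bigr)$. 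I would show that this is dominated by the diagonal term $e^{-nq}\sum_{\boldsymbol{t}}c_{\boldsymbol{t}}^2p_{\boldsymbol{t}}^2$, using Cauchy–Schwarz, $\bigl(\sum_{\boldsymbol{t}} c_{\boldsymbol{t}}p_{\boldsymbol{t}}\bigr)^2\leq|\mathcal{T}|\sum_{\boldsymbol{t}} c_{\boldsymbol{t}}^2p_{\boldsymbol{t}}^2$, together with the bound $nq\,e^{-nq}\leq 1$. If that domination fails for some choice of $\boldsymbol{c}$, the cross terms still only reduce the variance. In that case I would keep the upper bound and settle for a lower bound via the diagonal minus a controlled negative part.

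Adding the two pieces through the law of total variance then yields the stated order.
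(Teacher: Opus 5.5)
You follow the paper's route exactly: condition on the assignment indicators, apply the law of total variance, and bound $\mathbb{E}[\hat{\mathbb{V}}]$ and $\mathbb{V}[\hat{\mathbb{E}}]$ separately. Reading $\Tilde{Q}$ as $\hat{Q}$ is also what the paper's proof effectively does.

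Your upper-bound half is sound, and in places tighter than the paper's. You keep $(1-q)^n$ rather than $e^{-n/|\mathcal{T}|}$. Your computation $\text{Cov}(I_{\{n_{\boldsymbol{t}}\geq1\}},I_{\{n_{\boldsymbol{s}}\geq1\}})=(1-2q)^n-(1-q)^{2n}\leq0$, together with $c_{\boldsymbol{t}}p_{\boldsymbol{t}}\geq0$, is precisely what justifies the paper's unexplained step $\mathbb{V}(\sum_{\boldsymbol{t}}\cdot)\leq\sum_{\boldsymbol{t}}\mathbb{V}(\cdot)$.

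The lower-bound half, however, has a step that fails. The binomial fact $\mathbb{E}[n_{\boldsymbol{t}}^{-1}I_{\{n_{\boldsymbol{t}}\geq1\}}]\asymp\min\{1,|\mathcal{T}|/n\}$ is false for $n\lesssim|\mathcal{T}|$. There the expectation is carried by $n_{\boldsymbol{t}}=1$ and is at most $\mathbb{P}(n_{\boldsymbol{t}}\geq1)\leq n/|\mathcal{T}|$. Your own Jensen bound gives $\mathbb{P}(n_{\boldsymbol{t}}\geq1)/(1+nq)\asymp n/|\mathcal{T}|$, not $\Omega(1)$; the correct order is $\min\{n/|\mathcal{T}|,|\mathcal{T}|/n\}$. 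This cannot be patched, because the statement itself fails there. Both pieces carry a factor $\mathbb{P}(n_{\boldsymbol{t}}\geq1)\leq n/|\mathcal{T}|$, so $\mathbb{V}[\hat{Q}(\theta)]\leq (n/|\mathcal{T}|)(\sigma^2+1)\sum_{\boldsymbol{t}}p_{\boldsymbol{t}}^2$. For fixed $\sigma^2$ this is $o(\sigma^2\max_{\boldsymbol{t}}p_{\boldsymbol{t}})$ whenever $n/|\mathcal{T}|\to0$, while the claimed order is at least $\sigma^2\max_{\boldsymbol{t}}p_{\boldsymbol{t}}$. Your restriction to $n/|\mathcal{T}|\to\infty$ would rescue the binomial fact. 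But it must be a hypothesis of the whole theorem, not just of the second piece, and it then makes the $\min\{\sigma^2,\cdot\}$ vacuous.

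Second, the sandwich $\max_{\boldsymbol{t}}p_{\boldsymbol{t}}^2\leq\sum_{\boldsymbol{t}}p_{\boldsymbol{t}}^2\leq\max_{\boldsymbol{t}}p_{\boldsymbol{t}}$ does not give the first term ``of the claimed order'', and nothing can. For the product model with every $\theta_k=0.7$, $\sum_{\boldsymbol{t}}p_{\boldsymbol{t}}^2/\max_{\boldsymbol{t}}p_{\boldsymbol{t}}=(0.58/0.7)^K\to0$. Meanwhile, for $n\gtrsim|\mathcal{T}|$ the first piece is of order $\sigma^2(|\mathcal{T}|/n)\sum_{\boldsymbol{t}}p_{\boldsymbol{t}}^2$.

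So what is actually provable is the $O(\cdot)$ direction in general. On top of that, you get a matching lower bound with $\sum_{\boldsymbol{t}}p_{\boldsymbol{t}}^2$ in place of $\max_{\boldsymbol{t}}p_{\boldsymbol{t}}$ when $n\gtrsim|\mathcal{T}|$; state the result that way. This is also all the paper's argument establishes: it only derives upper bounds, and its displayed equality for $\mathbb{E}[\hat{\mathbb{V}}]$ is off in order when $n\ll|\mathcal{T}|$.

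In that regime the cross-term domination argument is unnecessary. First, $\mathbb{V}[\hat{Q}]\geq\mathbb{E}[\hat{\mathbb{V}}(\hat{Q})]\gtrsim\sigma^2(|\mathcal{T}|/n)\sum_{\boldsymbol{t}}p_{\boldsymbol{t}}^2$. Second, since $c_{\boldsymbol{t}}\leq1$, the exponential term is at most $(nq\,e^{-nq})\cdot(|\mathcal{T}|/n)\sum_{\boldsymbol{t}}p_{\boldsymbol{t}}^2$. Hence it is no larger in order than the first term once $\sigma^2$ is bounded below.
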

\begin{proof}

Let $\hat{\mathbb{E}}, \hat{\mathbb{V}}$ denote the conditional expectation and variance conditional on the vector $\left\{I_{\{i \in \mathcal{I}_{\boldsymbol{t}}\}}\right\}_{\{\boldsymbol{t}\in \mathcal{T},i \in [n]\}} $. Through simple computations, we can easily have that for each $\boldsymbol{t}\in \mathcal{T}$,
\be 
\hat{\mathbb{E}}[\hat{c}_{\boldsymbol{t}}]&=c_{\boldsymbol{t}}I_{\{n_{\boldsymbol{t}}\neq 0\}},\\
\hat{\mathbb{V}}\left(\hat{c}_{\boldsymbol{t}}\right)&=\frac{\sigma^2I_{\{n_{\boldsymbol{t}}\neq 0\}} }{n_{\boldsymbol{t}} }
\ee
Under the additional assumption that $\mathbb{V}[Y_{i}]=\sigma^2$ for all $i$. Taking unconditional expectation again in the first relation provides us that $\mathbb{E}[\hat{c}_{\boldsymbol{t}}]=c_{\boldsymbol{t}}\left(1-\exp{\left(-\frac{n}{|\mathcal{T}|}\right)}\right)$ and from this we easily have that $\mathbb{E}[\hat{Q}(\theta)]=Q(\theta)\left(1-\exp{\left(-\frac{n}{|\mathcal{T}|}\right)}\right). $ Using the second relation we can obtain in a similar way that  
\be
\mathbb{E}[\hat{\mathbb{V}}(\Tilde{Q}(\theta))]&=\min\left\{\sigma^2,\frac{\sigma^2|\mathcal{T}|}{n}\right\}\sum_{\boldsymbol{t}\in \mathcal{T}}\left(\mathbb{P}_{\theta}(\boldsymbol{T}=\boldsymbol{t})\right)^2,\\
&\leq \min\left\{\sigma^2,\frac{\sigma^2|\mathcal{T}|}{n}\right\}\max_{\boldsymbol{t}\in \mathcal{T}} \mathbb{P}_{\theta}\left(\boldsymbol{T}=\boldsymbol{t}\right).
\ee
Now we observe that,
\be
\mathbb{V}(\hat{\mathbb{E}}[\Tilde{Q}(\theta))])&=\mathbb{V}\left(\sum_{\boldsymbol{t}\in \mathcal{T}}c_{\boldsymbol{t}}I_{\{n_{\boldsymbol{t}}\neq 0\}}\mathbb{P}_{\theta}(\boldsymbol{T}=\boldsymbol{t})\right),\\
& \leq \sum_{\boldsymbol{t}\in \mathcal{T}}c^{2}_{\boldsymbol{t}}\mathbb{V}\left(I_{\{n_{\boldsymbol{t}}\neq 0\}}\right)(\mathbb{P}_{\theta}(\boldsymbol{T}=\boldsymbol{t}))^2,\\
& \leq \exp{\left(-\frac{n}{|\mathcal{T}|}\right)}\sum_{\boldsymbol{t}\in \mathcal{T}}c^{2}_{\boldsymbol{t}}(\mathbb{P}_{\theta}(\boldsymbol{T}=\boldsymbol{t}))^2,\\
\ee
We now use the following relation that,
\be
\mathbb{V}[\Tilde{Q}(\theta)]&=\mathbb{V}(\hat{\mathbb{E}}[\Tilde{Q}(\theta))])+\mathbb{E}[\hat{\mathbb{V}}(\Tilde{Q}(\theta))],\\
&=\Theta\left(\min\left\{\sigma^2,\frac{\sigma^2|\mathcal{T}|}{n}\right\}\max_{\boldsymbol{t}\in \mathcal{T}} \mathbb{P}_{\theta}\left(\boldsymbol{T}=\boldsymbol{t}\right)+\exp{\left(-\frac{n}{|\mathcal{T}|}\right)}\sum_{\boldsymbol{t}\in \mathcal{T}}c^{2}_{\boldsymbol{t}}(\mathbb{P}_{\theta}(\boldsymbol{T}=\boldsymbol{t}))^2\right).
\ee
\end{proof}
\subsection{Inference on $Q(\theta^{*})$}
According to our definition, $Q(\theta^{*})=\max_{\theta \in \Theta}Q(\theta)$. So, we construct our estimator as follows: we partition our data into $\mathcal{I}_{1}$ and $\mathcal{I}_{2}$. Hence, we have $[n]=\mathcal{I}_{1} \mathbin {\dot {\cup}} \mathcal{I}_{2}$. Let $\hat{Q}^{(i)}(\cdot)$ denote our estimator obtained from the data set $\mathcal{I}_{i}$ for $i=1,2$. Define, $\hat{\theta}^{(1)}=\arg\max_{\theta \in \Theta}\hat{Q}^{(1)}\left(\theta\right)$. Our final estimator $\hat{Q}$ is obtained as follows, $\hat{Q}= \hat{Q}^{(2)}\left(\hat{\theta}^{(1)}\right)$. One can use the results in \cite{zhang2020floodgate} to conclude it's theoretical properties.
\subsection{Hajek Estimator}
In this section, we define a new estimator for $Q(\theta)$. For that, first for each $i=1,2,\dots,n$, we define $W^{\theta}_{i}=\prod_{k=1}^K
\frac{\mathbb{P}_{\theta}(T_{ik})}{\mathbb{P}(T_{ik})}$. Our new estimator $\Bar{Q}(\theta)$ to be $$\Bar{Q}(\theta):=\frac{\sum_{i=1}^{n}Y_{i}W^{\theta}_{i} }{\sum_{i=1}^{n}Y_{i}W^{\theta}_{i}}. $$ Since, this estimator is exactly the \textit{importance sampling estimator} one encounters in Monte Carlo sampling, incorporating their well-known properties we can very well conclude that  
\section{High dimension}
In this section, we attempt to extend the stochastic intervention approach to address the question of when the number of treatment combination is much more than the number of observations i.e. when $|\mathcal{T}| \gg n$. As worked out in \cref{thm:w_var}, the variance of the $\Tilde{Q}(\theta)$ contains the term $\sum_{\boldsymbol{t}\in \mathcal{T}}(\mathbb{P}_{\theta}(\boldsymbol{T}=\boldsymbol{t}))^2 $ which is the decaying as each $\theta_{i} \to 0.5$. Hence, we can target to estimate $\theta$ around a small box around the vector $(0.5, \dots, 0.5)$ ( there are $K$ terms in the vector). Let $\{\varepsilon_{n}\}_{n \in \mathbb{N}}$ be sequence in $(0,1)$ such that for all $\theta \in [0.5(1-\varepsilon_{n}),0.5(1+\varepsilon_{n})]^{K}$ such that $\mathbb{V}[\Tilde{Q}(\theta)] \to 0$ or more appropriately: $$\frac{|\mathcal{T}|(0.5(1 + \varepsilon_{n}))^{K} }{n} \to 0 ,$$
Then, we look at the following optimization problem: $$\hat{\theta}=\arg\max_{\theta \in [0.5(1-\varepsilon_{n}),0.5(1+\varepsilon_{n})]^{K}} \Tilde{Q}(\theta).$$ We parametrize $n=\alpha^{K}$ where $\alpha \gg 1$ and $\nu_{n}=0.5(1+\varepsilon_{n})$ to be chosen such that $$\frac{|\mathcal{T}| }{n}{\nu_{n}}^{K} \leq C$$ where $C$ is a constant independent of $K$ and suitably chosen by the user. We then perform the following optimization, $$\hat{\theta}=\arg\max_{\theta \in [1-\nu_{n},\nu_{n}]^{K}} \Tilde{Q}(\theta).$$
\begin{subsection}{N\"aive Extension}
Our quantity of interest the the value $\theta=\theta^{*}$ at which the following function is maximised 
$$Q(\theta)=\sum_{\boldsymbol{t} \in \mathcal{T}}c_{t}\mathbb{P}_{\theta}\left(\boldsymbol{T}=\boldsymbol{t}\right). $$
We assume there exist some $k < < \min\{|\mathcal{T}|,n\}$ for which there exist $k$ distinct values $d_{1},d_{2},\dots,d_{k} \in (0,1)$ such that $$\{c_{\boldsymbol{t}}\}_{\{\boldsymbol{t}\in \mathcal{T}\}} = \{d_{1},\dots,d_{k}\}. $$
Also, we define the sets $\mathcal{I}_{1},\dots, \mathcal{I}_{k}$ such that for each $i=1,2,\dots,k$ we have $$\mathcal{I}_{i}=\{\boldsymbol{t}\in \mathcal{T}: c_{\boldsymbol{t}}=d_i\}. $$
The model can be formalized as follows
\begin{itemize}
    \item Known parameters: $k$ and other older ones.
    \item Unknown parameters: $(\mathcal{I}_{1},d_1),\dots, (\mathcal{I}_{k},d_k).$
\end{itemize}
Methodology for inference would be: run a $k$-means algorithms with number of clusters evaluated at $k$ and use the output to make inference on $\theta$. Alternately, one can look at the following model and use a standard EM-algorithm to infer on $\pi_i$ and $d_{i} $, $$Y_{i} \stackrel{\text{i.i.d}}{\sim} \sum_{i=1}^{k}\pi_i \text{Ber}(d_{i}) \quad i=1,2,\dots, n $$
Or in case of continuous outcomes
$$\hat{c}_{\boldsymbol{t}} \stackrel{\text{i.i.d}}{\sim} \sum_{i=1}^{k}\pi_i \mathcal{N}(d_{i},\sigma_{i}^2) \quad \boldsymbol{t}\in \mathcal{T}.$$
where $\hat{c}_{\boldsymbol{t}}$ are conditionally unbiased estimators of potential outcome corresponding to treatment combination $\hat{c}_{\boldsymbol{t}}$.
\begin{itemize}
    \item From the output of the EM-algorithm, the $\hat{d}_{i}$'s produced, let $$\hat{j}:=\arg \max \{\hat{d}_{i}\}. $$ 
    \item Use the sample in the $\hat{j}$-th cluster to estimate the probability weights.
\end{itemize}
\end{subsection}

\section{A Different Formulation}
In this section, we look at the following criterion for optimization, $$Q(\boldsymbol{\theta})=\mathbb{E}_{\boldsymbol{\theta}}[c_{\boldsymbol{t}}]-\lambda \text{Var}_{\boldsymbol{\theta}}[c_{\boldsymbol{t}}] $$ where $ \lambda$ is a tuning parameter which is taken to be strictly positive.  We recall our definition that for each $\boldsymbol{t} \in \mathcal{T}$, we have $\Tilde{c}_{\boldsymbol{t}}=\frac{\sum_{i \in \mathcal{I}_{\boldsymbol{t}}}Y_{i}(\boldsymbol{t})  }{n\mathbb{P}\left(\boldsymbol{T}=\boldsymbol{t}\right)}$. Additionally, we also assume that for each $\boldsymbol{t}\in \mathcal{T}$, we have $Y_{i}(\boldsymbol{t})$ has mean $c_{\boldsymbol{t}}$ and variance $1$ which yields
\be
\mathbb{E}[\Tilde{c}_{\boldsymbol{t}}]=c_{\boldsymbol{t}}, \quad \text{Var}[\Tilde{c}_{\boldsymbol{t}}]=\frac{1}{n\mathbb{P}\left(\boldsymbol{T}=\boldsymbol{t}\right)}
\ee
This motivates us to define
\be
\hat{\text{Var}}_{\boldsymbol{\theta}}[\Tilde{c}_{\boldsymbol{t}}]&=\mathbb{E}_{\boldsymbol{\theta}}\left(\Tilde{c}_{\boldsymbol{t}}-\mathbb{E}_{\boldsymbol{\theta}}[\Tilde{c}_{\boldsymbol{t}}]\right)^2,\\
&=\mathbb{E}_{\boldsymbol{\theta}}[\Tilde{c}^{2}_{\boldsymbol{t}}]-\left(\mathbb{E}_{\boldsymbol{\theta}}[\Tilde{c}_{\boldsymbol{t}}]\right)^2
\ee
Taking $\mathbb{E}$ both sides of the above definition we obtain
\be
\mathbb{E}\left[\hat{\text{Var}}_{\boldsymbol{\theta}}[\Tilde{c}_{\boldsymbol{t}}]\right]=&\mathbb{E}_{\theta}\left[c^{2}_{\boldsymbol{t}}+\frac{1}{n\mathbb{P}\left(\boldsymbol{T}=\boldsymbol{t}\right)}\right]\\
&-\left(\sum_{\boldsymbol{t} \in \mathcal{T}}\left(c^{2}_{\boldsymbol{t}}+\frac{1}{n\mathbb{P}\left(\boldsymbol{T}=\boldsymbol{t}\right)}\right)\left(\mathbb{P}_{\boldsymbol{\theta}}\left(\boldsymbol{T}=\boldsymbol{t}\right)\right)^2+\sum_{\boldsymbol{t}\neq \boldsymbol{s} \in \mathcal{T}}\left\{c_{\boldsymbol{t}}c_{\boldsymbol{s}}\frac{n(n-1)}{n^2}\right\}\mathbb{P}_{\boldsymbol{\theta}}\left(\boldsymbol{T}=\boldsymbol{t}\right)\mathbb{P}_{\boldsymbol{\theta}}\left(\boldsymbol{T}=\boldsymbol{s}\right)\right)\\
=& \text{Var}_{\boldsymbol{\theta}}\left[c_{\boldsymbol{t}}\right]+\Delta_{\theta}+\text{O}\left(\frac{1}{n}\right).
\ee
where $\Delta_{\theta}=\frac{1}{n\mathbb{P}\left(\boldsymbol{T}=\boldsymbol{t}\right)}-\sum_{\boldsymbol{t} \in \mathcal{T}}\frac{\left(\mathbb{P}_{\boldsymbol{\theta}}\left(\boldsymbol{T}=\boldsymbol{t}\right)\right)^2}{n\mathbb{P}\left(\boldsymbol{T}=\boldsymbol{t}\right)}$. Hence, our final candidate estimator for $Q(\boldsymbol{\theta})$ be defined as $$\hat{Q}(\boldsymbol{\theta})=\mathbb{E}_{\boldsymbol{\theta}}[\Tilde{c}_{\boldsymbol{t}}]-\lambda\left(\hat{\text{Var}}_{\boldsymbol{\theta}}[\Tilde{c}_{\boldsymbol{t}}]-\Delta_{\theta}\right) $$
This estimator can be modified to address the case when variance associated to the treatment combination $\boldsymbol{t}\in \mathcal{T}$. We also define other estimators which might be preferable to the previous one in specific circumstances. Here, we list them as follows:
\be
\hat{\text{Var}}^{(1)}_{\boldsymbol{\theta}}[\Tilde{c}_{\boldsymbol{t}}]&=\mathbb{E}_{\boldsymbol{\theta}}[\Tilde{c}^{2}_{\boldsymbol{t}}]-\frac{1}{n(n-1)}\sum_{1\leq i \neq j \leq n}Y_{i}Y_{j}\frac{\mathbb{P}_{\boldsymbol{\theta}}\left(\boldsymbol{T}_{i}=\boldsymbol{t}_{i}, \boldsymbol{T}_{j}=\boldsymbol{t}_{j}\right) }{\mathbb{P}\left(\boldsymbol{T}_{i}=\boldsymbol{t}_{i}, \boldsymbol{T}_{j}=\boldsymbol{t}_{j}\right)}
\ee
Based on this, our final estimator can be stated as 
\be
\hat{Q}^{(1)}(\boldsymbol{\theta})=\mathbb{E}_{\boldsymbol{\theta}}[\Tilde{c}_{\boldsymbol{t}}]-\lambda\left(\hat{\text{Var}}^{(1)}_{\boldsymbol{\theta}}[\Tilde{c}_{\boldsymbol{t}}]-\Delta^{(1)}_{\theta}\right) 
\ee
where $\Delta^{(1)}_{\theta}=\mathbb{E}_{\boldsymbol{\theta}}\left[\frac{\sigma^{2}_{\boldsymbol{t}} }{n\mathbb{P}\left(\boldsymbol{T}=\boldsymbol{t} \right)}\right]$. Analogously, we define another estimator as follows:
\be
\hat{\text{Var}}^{(2)}_{\boldsymbol{\theta}}[\Tilde{c}_{\boldsymbol{t}}]&=\frac{1}{n}\sum_{1 \leq i \leq n}Y^{2}_{i}\frac{\mathbb{P}_{\boldsymbol{\theta}}\left(\boldsymbol{T}_{i}=\boldsymbol{t}_{i}\right) }{\mathbb{P}\left(\boldsymbol{T}_{i}=\boldsymbol{t}_{i}\right)}-\frac{1}{n(n-1)}\sum_{1\leq i \neq j \leq n}Y_{i}Y_{j}\frac{\mathbb{P}_{\boldsymbol{\theta}}\left(\boldsymbol{T}_{i}=\boldsymbol{t}_{i}, \boldsymbol{T}_{j}=\boldsymbol{t}_{j}\right) }{\mathbb{P}\left(\boldsymbol{T}_{i}=\boldsymbol{t}_{i}, \boldsymbol{T}_{j}=\boldsymbol{t}_{j}\right)}
\ee
and based on this we define the following estimator
\be
\hat{Q}^{(2)}(\boldsymbol{\theta})=\mathbb{E}_{\boldsymbol{\theta}}[\Tilde{c}_{\boldsymbol{t}}]-\lambda\left(\hat{\text{Var}}^{(2)}_{\boldsymbol{\theta}}[\Tilde{c}_{\boldsymbol{t}}]-\Delta^{(2)}_{\theta}\right) 
\ee
where $\Delta^{(2)}_{\theta}=\mathbb{E}_{\boldsymbol{\theta}}\left[\sigma^{2}_{\boldsymbol{t}} \right]$.
\section{Basis Selection: A Stochastic Optimization Overview}
Consider the problem where we have data $(Y_{1},X_{1}),\dots,(Y_{n},X_{n})$ where $Y$'s are the response and $X$'s are the $d$-dimensional co-variate having a density $f$ in $[0,1]^{d}$. For simplicity, we assume that $f$ is known and taken to be the uniform distribution over $[0,1]^{d}$. We further assume that 
$$\mathbb{E}[Y|X]=b(X) $$ where $b: [0,1]^{d} \to \mathbb{R}$ is an unknown function with further constraints to be stated shortly. Now, let us take $\{\phi_{1}(\cdot),\dots,\phi_{p}(\cdot)\}$ to be a collection of $p$ \textit{localized bases} in $[0,1]^{d}$ where $p \gg n$. Having introduced this, we let $b$ to satisfy: $$b(x):=\sum_{j=1}^{p}\left<b,\phi_{j}\right>\phi_{j}(x), $$which provides us with the following relation
\be
\|b\|^{2}_{2,f}&=\int b^{2}(x)f(x)dx,\\
&=\sum_{i=1}^{p}|\left<b,\phi_{i}\right>_{f}|^{2},\\
&=\sum_{i=1}^{p}c_{i}
\ee
where for each $i \in \{1,\dotsm,p\}$, we define $c_{i}=|\left<b,\phi_{i}\right>_{f}|^{2}$. The goal is find a subset of $k \ll n$ elements $S$ from $\{\phi_{1}(\cdot),\dots,\phi_{p}(\cdot)\}$ such that $\sum_{j \in S} c_{j}$ is \textit{closest} to $\sum_{i = 1}^{p} c_{i}$. The motivation for this problem can be found in \cite{mukherjee2017semiparametric}, \cite{robins2017higher}, \cite{robins2017minimax}. An \textit{unbiased estimator} of $c_{q}$ is $\hat{c}_{q}$ being defined as $$\hat{c}_{q}=\frac{1}{n(n-1)}\sum_{1\leq i \neq j \leq n}Y_{i}\phi_{q}(X_i)\phi_{q}(X_{j})Y_{j},$$ for each $p=1,2,\dots,n$. We now implement the \textit{stochastic intervention framework} to approach the problem. Let $\alpha_{\theta}=(\alpha_{\theta}(1),\dots,\alpha_{\theta}(p)) $ be an \textit{interpret-able} probability distribution over the integers $\{1,2,\dots,p\}$ parametrized by a very \textit{low dimensional subset} $\theta \in \Theta \subseteq \mathbb{R}^{l}$, i.e, $\sum_{i=1}^{p}\alpha_{\theta}(i)=1$. Hence, our task is the following: find ${\theta}^{*}$ such that $${\theta}^{*}=\arg\max_{\theta}\left\{\sum_{i=1}^{p}c_{i}\alpha_{\theta}(i)+\lambda \text{pen}(\theta)\right\}.$$ for an appropriately chosen penalty function $\text{pen}(\theta)$. In order to make inference on ${\theta}^{*}$ we define
$$\hat{\theta}=\arg\max_{\theta}\left\{\sum_{i=1}^{p}\hat{c}_{i}\alpha_{\theta}(i)+\lambda \text{pen}(\theta)\right\}.$$
Then, our task is to design a meaningful statistical procedure to extract information from the vector $\hat{\theta}$ in order to get those $k$ elements of the basis. This would be our next goal. In order to operationalize this entire procedure we encode the integers $\{1,2,\dots,p\}$ in their binary representation. Hence, each integer can be represented as a string of $\log_{2}(p)$ many $0$'s and $1$'s. Now the probability distribution $\alpha_{\theta}$ can be constructed by considering an independent coin for each position in the bit representation i.e we pick a point from the set $\{1,2,\dots,p\}$ according to $\alpha_{\theta}$ through it's bit-wise representation by flipping $\log_{2}p$ many coins with the $i$-th coin having success probability $\theta_{i}$. Hence, under this sampling scheme, the parameter space $\Theta$ is a $\log_{2}(p)$ dimensional subset ( or $l= \log_{2}(p)$). We consider the penalty function to be the entropy function,
$$\text{pen}(\theta)=-\sum_{i=1}^{l}\left\{\theta_{i}\log (\theta_{i})+(1-\theta_{i})\log(1-\theta_{i}) \right\}, $$
Finally, we choose $k$ elements from the basis by first obtaining $k$ independent draws from $\alpha_{\hat{\theta}}$, i.e., $$i_{1},\dots,i_{k} \stackrel{i.i.d}{\sim} \alpha_{\hat{\theta}} $$ and then returning $\{\phi_{i_1},\dots,\phi_{i_k}\}$.
 \phantomsection
  \addcontentsline{toc}{section}{References}
  \bibliographystyle{amsalpha}
  \bibliography{biblio.bib}

@book{vershynin2018high,
  title={High-dimensional probability: An introduction with applications in data science},
  author={Vershynin, Roman},
  volume={47},
  year={2018},
  publisher={Cambridge university press}
}

@article{bernstein1964modification,
  title={On a modification of Chebyshev’s inequality and on the error in Laplace formula},
  author={Bernstein, SN},
  journal={Collected Works, Izd-vo’Nauka’, Moscow (in Russian)},
  volume={4},
  pages={71--80},
  year={1964}
}

@article{mukherjee2017semiparametric,
  title={Semiparametric efficient empirical higher order influence function estimators},
  author={Mukherjee, Rajarshi and Newey, Whitney K and Robins, James M},
  journal={arXiv preprint arXiv:1705.07577},
  year={2017}
}

@article{robins2017higher,
  title={Higher order estimating equations for high-dimensional models},
  author={Robins, James and Li, Lingling and Mukherjee, Rajarshi and Tchetgen, Eric Tchetgen and van der Vaart, Aad},
  journal={Annals of statistics},
  volume={45},
  number={5},
  pages={1951},
  year={2017},
  publisher={NIH Public Access}
}

@article{robins2017minimax,
  title={Minimax estimation of a functional on a structured high-dimensional model},
  author={Robins, James M and Li, Lingling and Mukherjee, Rajarshi and Tchetgen, Eric Tchetgen and van der Vaart, Aad and others},
  journal={Annals of Statistics},
  volume={45},
  number={5},
  pages={1951--1987},
  year={2017},
  publisher={Institute of Mathematical Statistics}
}

@article{zhang2020floodgate,
  title={Floodgate: inference for model-free variable importance},
  author={Zhang, Lu and Janson, Lucas},
  journal={arXiv preprint arXiv:2007.01283},
  year={2020}
}

\end{document}